\documentclass[a4paper,10pt]{amsart}
 \usepackage{amsfonts,mathrsfs}
 \usepackage{amsrefs}
  \usepackage{enumitem}
 \usepackage{amsthm}
 \usepackage{amssymb}
 \usepackage{tikz-cd}

\theoremstyle{definition}
\newtheorem{Def}{Definition}[section]
\newtheorem{ex}[Def]{Example}
\newtheorem{rem}[Def]{Remark}

\theoremstyle{plain}
\newtheorem{prop}[Def]{Proposition}
\newtheorem{thm}[Def]{Theorem}
\newtheorem{lem}[Def]{Lemma}
\newtheorem{cor}[Def]{Corollary}

\usepackage{mathrsfs}
%--------------------- Usefol Operators ------------------

 % Free Algebra on prescribed number of variables

%------------- Derivatives ----------------------

%------------- Categories + AG ------------------

\newcommand{\Pos}{\operatorname{P}}

\newcommand{\Pic}{\operatorname{Pic}}

\newcommand{\conv}{\operatorname{conv}}

 %External Hom
 %Internal Hom
 %Sheaf Hom
 %Completed Tensor Product
 %Completed Projective Tensor product
 %Completed Inductive Tensor product

\newcommand{\Supp}{\operatorname{Supp}}

 %Annihilator
 %Castelnuovo-Mumford regularity
 %rank of a free module
 %Associated primes

 %Global Ext
 %Sheaf Ext
 %Global Tor
 %Sheaf Tor
% Category of ...
 %Bornological Spaces
 %Sets
 %Abelian Groups
 %Affine Schemes
 %Schemes
 %Groups
 %Vector Spaces
 %Modules over a ring
 %Topological Spaces
 %Compact Topological Spaces
 %Benach Spaces
 %Locally Convex Spaces
 %Frechet Spaces
 %Sheaves on a space

\newcommand{\divv}{\operatorname{div}}

%------------- Lie Groups and Algebras ----------

%------------- Operators -------------------------

 % Hilbert-Schmidt
 %Trace Class

 % matrix to vector
 % Hilbert Polynomial
 %Spectrum
 %adjugate matrix

%---------------Topology-----------------------
 % Linking number
 %(co-)homology

%------------------Font Definitions-------------------
\newcommand{\shS}{{\mathscr S}}

%----------------MathCal---------------------------

\newcommand{\cG}{{\mathcal G}}

\newcommand{\cO}{{\mathcal O}}

%----------------MathFrak Capital------------------

%----------------MathFrak Lower Case---------------

%----------------MathBB----------------------------

\newcommand{\A}{{\mathbb A}}

\newcommand{\C}{{\mathbb C}}
\newcommand{\R}{{\mathbb R}}
\newcommand{\pp}{\mathbb{P}}

\newcommand{\N}{{\mathbb N}}
\newcommand{\Z}{{\mathbb Z}}

\newcommand{\F}{{\mathbb F}}

%----------------MathBF Lower Case-----------------

%----------------MathBF Greek ---------------------

%----------------MathBF + Cal ---------------------

\title{Non-negative polynomials on generalized elliptic curves}
\author{Mario Kummer}
\address{Technische Universit\"at, Dresden, Germany} 
\email{mario.kummer@tu-dresden.de}
\author{Alja\v z Zalar}
\address{
Faculty of Computer and Information Science, University of Ljubljana  \& 
Faculty of Mathematics and Physics, University of Ljubljana  \&
Institute of Mathematics, Physics and Mechanics, Ljubljana, Slovenia.}
\email{aljaz.zalar@fri.uni-lj.si}
\thanks{M. Kummer was partially supported by DFG grant 502861109.
A. Zalar was supported by the Slovenian Research Agency 
program P1-0288 and grants J1-50002, J1-60011.}

\pagestyle{headings}
\begin{document}

\subjclass[2020]{Primary 14P05, 14H52; Secondary 52A20}

\begin{abstract}
 We study the cone of non-negative polynomials on generalized elliptic curves. We show that the zero set of every extreme ray has dense real points.
 If a generalized elliptic curve is embedded via a complete linear system, then we show that the convex hull of its real points (taken inside any affine chart containing all real points) is a spectrahedron. On the way, we generalize a result by Geyer--Martens on $2$-torsion points in the Picard group of smooth real curves (of arbitrary genus) to possibly singular and reducible ones.
 \end{abstract}
\maketitle

\section{Introduction}
Given a projective variety $X$ over $\R$, i.e., a reduced and projective scheme of finite type over $\R$, and a line bundle $L$ on $X$, we consider the cone $\Pos(X,L)$ of global sections $f\in H^0(X,L\otimes L)$ that are non-negative on $X(\R)$.  If $L$ is very ample and the corresponding embedding of $X$ to projective space is projectively normal, then $\Pos(X,L)$ corresponds to the cone of quadrics that are non-negative on $X$, and this cone has been studied extensively for various different varieties $X$. For example, in the case of irreducible varieties of minimal degree \cite{blekherman1} and, more generally, varieties of Castelnuovo--Mumford regularity $2$ \cite{dream}, it admits the  description as all quadrics that can be written as a sum of squares of linear forms, generalizing a classical theorem by Hilbert \cite{hilbert}.
In this note, we examine $\Pos(X,L)$ when $X$ is a \emph{generalized elliptic curve} over $\R$, i.e., a one dimensional, projective, and connected variety over $\R$ with dense real points such that $\omega^\circ_X\cong\cO_X$. The prototype of a generalized elliptic curve is a reduced plane cubic. We will see that, even though not every element of $\Pos(X,L)$ is a sum of squares of elements of $H^0(X,L)$, the cone $\Pos(X,L)$ still admits a description with several, from the convex algebro-geometric point of view desirable properties. 
Our results rely on a description of extreme rays:

\begin{thm}\label{thm:extreme1}
    Let $X$ be a generalized elliptic curve over $\R$ and let $L$ be a line bundle on $X$.
If $f\in H^0(X,L\otimes L)$ generates an extreme ray of $\Pos(X,L)$, then the real points are Zariski dense in the zero set of $f$ on $X$.
\end{thm}
While in the smooth case Theorem \ref{thm:extreme1} is a direct consequence of the Riemann--Roch theorem, as observed for very ample $L$ in \cite{baldi}, the singular case is considerably more difficult.
A consequence of Theorem \ref{thm:extreme1} is that every non-negative global section on a generalized elliptic curve is a sum of squares of global sections of a not necessarily invertible sheaf:

\begin{thm}\label{thm:sumofsquares}
    Let $X$ be a generalized elliptic curve over $\R$ and let $L$ be a line bundle on $X$. There exists a coherent sheaf $\cG$ on $X$ together with a morphism $\varphi\colon \cG\otimes\cG\to L\otimes L$ such that the following holds:
    \begin{enumerate}
        \item The morphism $\varphi$ is \emph{positive semi-definite} in the sense that for every $g\in H^0(X,\cG)$ the global section $\varphi(g\otimes g)$ of $L\otimes L$ is non-negative on $X(\R)$.
        \item Every extreme ray of $\Pos(X,L)$ is generated by an element of the form $\varphi(g\otimes g)$ for some $g\in H^0(X,\cG)$.
    \end{enumerate}
    In particular  $\Pos(X,L)$ is the cone of sums of squares of global sections of $\cG$:
    \begin{equation*}
        \Pos(X,L)=\{\varphi(g_1\otimes g_1+\cdots+g_r\otimes g_r)\mid r\in\N \textrm{ and }g_1,\ldots,g_r\in H^0(X,\cG)\}.
    \end{equation*}
\end{thm}

A direct consequence of Theorem \ref{thm:sumofsquares} is that the dual cone $\Pos(X,L)^\vee$ can be described as  the cone of all linear forms $\ell\colon H^0(X,L\otimes L)\to\R$ such that 
\begin{equation*}
    B_\ell\colon H^0(X,\cG)\times H^0(X,\cG)\to\R,\, (g_1,g_2)\mapsto \ell(\varphi(g_1\otimes g_2))
\end{equation*}
is positive semi-definite. From this we obtain the following insight on convex hulls of generalized elliptic curves which appears to be new even in the smooth case.

\begin{cor}\label{cor:convexhull}
    Let $X\subseteq\pp^n$ be a generalized elliptic curve over $\R$ embedded via a complete linear system and let $H\subseteq\pp^n$ be a hyperplane with $H\cap X(\R)=\emptyset$. Then the convex hull of $X(\R)$ in $\R^n=(\pp^n\smallsetminus H)(\R)$ is a spectrahedron.
\end{cor}
Spectrahedra are convex semi-algebraic sets defined by a linear matrix inequality. They are fundamental objects in convex algebraic geometry; they are the feasible sets of semi-definite programming and have many desirable properties \cites{clausbook,timdaniel,siambook}.
Note that, while the convex hull of a curve is always a spectrahedral shadow \cites{scheiderercurves,scheiderercurves2}, i.e., the image of a spectrahedron under a linear map, 
it is rarely a spectrahedron itself. 
A more careful analysis of the proof of Corollar \ref{cor:convexhull} (see Remark \ref{rem:blocksize2}) will show that the description of the convex hull of $X(\R)$ in Corollary \ref{cor:convexhull} as a spectrahedron is particularly nice in that it is given by block matrices with blocks of size at most $\frac{n+1}{2}$.
Convex hulls of elliptic curves were studied before in \cites{scheiderergenusone,kummersinn}.

In order to deduce Corollary \ref{cor:convexhull} from Theorem \ref{thm:sumofsquares}, we need a certain divisibility result on the Picard group of a generalized elliptic curve. In Section \ref{sec:realcurves} we prove such for arbitrary real curves. From this we also deduce a generalization to singular curves of a result by Geyer--Martens \cite{geyermartens} on $2$-torsion points in the Picard group of a smooth real curve, which might be of independent interest (Corollar \ref{cor:geyer}).

\section{Preliminaries}
\subsection{Notation}
By a \emph{variety} over a field $k$ we mean a reduced and separated scheme of finite type over $k$, not necessarily irreducible. If $L$ is a line bundle on a normal irreducible variety $X$ and $f$ a rational section that does not vanish identically on an irreducible component of $X$, then we denote by 
\begin{equation*}
 \divv(f)   = \sum_{x\in X} v_x(f)\cdot x
\end{equation*}
its divisor (a Weil divisor); here the sum is taken over all points $x$ of codimension $1$ and $v_x(f)$ denotes the valuation of $f$ at $x$ in a trivialization of $L$ around $x$.
A \emph{curve} over $k$ is a projective variety of pure dimension $1$ over $k$. 
Sometimes we will use the fact that every line bundle $L$ on a curve $X$ over $k$ is isomorphic to $\cO_X(D)$ where $D$ is a (Weil) divisor supported on the regular points of $X$ \cite{stacks-project}*{Lemma 0AYM}. Here $\cO_X(D)$ denotes the subsheaf of the sheaf of total quotient rings of $\cO_X$ associated to $D$. Further, we say that a line bundle $L$ on a curve $X$ over $k$ has degree $0$ if the pull-back to every irreducible component of the normalization of $X$ has degree $0$. The subgroup of $\Pic(X)$ of degree $0$ line bundles on $X$ is denoted by $\Pic^\circ(X)$.

\subsection{Non-negativity of sections}
Let $X$ be a variety over $\R$ and let $L$ be a line bundle on $X$. We say that a section $f\in H^0(X,L\otimes L)$ is \emph{non-negative} or \emph{positive} at $x\in X(\R)$ if for a trivialization of $L|_U\cong \cO_X|_U$ on an open affine neighborhood $U$ of $x$ the section $f$ is mapped by the induced map
\begin{equation*}
    H^0(X,L\otimes L)\to L(U)\otimes L(U)\to\cO_X(U)\otimes\cO_X(U)\to \cO_X(U)
\end{equation*}
to a regular function on $U$ that is non-negative or positive at $x$, respectively. Note that the sign at $x$ of this regular function does not depend on the chosen trivialization of $L$. 
We denote by $\Pos(X,L)$ the closed convex cone of all $f\in H^0(X,L\otimes L)$ that are non-negative on $X(\R)$. 

We recall some straightforward observations that we will use in the proofs.
If $X$ is a curve and $f\in H^0(X,L\otimes L)$ is non-negative in a (Euclidean) neighborhood of a regular point  $x\in X(\R)$, then $v_x(f)$ is even. Moreover, if $g\in H^0(X,L\otimes L)$ is another section which satisfies $v_x(g)\geq v_x(f)$, then, for sufficiently small $\epsilon>0$, also $f\pm\epsilon g$ is non-negative in a neighborhood of $x$. If $\pi\colon \tilde{X}\to X$ is the normalization of $X$, then a section $h\in H^0(X,L\otimes L)$ is non-negative on the set of non-isolated real points if and only if its pull-back $\pi^*h$ is non-negative on $\tilde{X}(\R)$. If $L=\cO_X(D)$ for a divisor $D$ supported on regular points of $X$, then $f\in H^0(X,\cO_X(2D))$ is {non-negative} or {positive} at a real point $x$ of $U=X\smallsetminus\Supp(D)$ if and only if $f$ is {non-negative} or {positive} at $x$ as a regular function on $U$. In particular, if $f$ is non-negative on the real points of $U$ as regular function on $U$, then $f\in\Pos(X,L)$ because isolated real points are singular points of $X$ and thus in $U$.
\subsection{The Picard group of real curves}\label{sec:realcurves} In this section we prove some basic facts about the Picard group of not necessarily irreducible or smooth real curves. We start with the following well-known lemma over the complex numbers.
\begin{lem}\label{lem:complexdivisible}
    If $X$ is a curve over $\C$, then $\Pic^\circ(X)$ is divisible.
\end{lem}
\begin{proof}
    It is shown in the proof of \cite{stacks-project}*{Proposition 0C20} that $\Pic^\circ(X)$ can be obtained from $\Pic^\circ(\tilde{X})$, where $\tilde{X}$ is the normalization of $X$, by a sequence of extensions by $(\C^\times,\cdot)$ and $(\C,+)$. Now the claim follows because the groups $\Pic^\circ(\tilde{X})$, $(\C^\times,\cdot)$ and $(\C,+)$ are all divisible.
\end{proof}
 The following can be interpreted as a divisibility result on the so-called \emph{narrow class group} of a real curve. In the smooth case, it is  due to Hanselka \cite{hanselka}*{Section 4}.
 \begin{prop}\label{prop:narrow}
     Let $X$ be a curve over $\R$ such that $X(\R)$ is Zariski dense in $X$ and let $D$ be a divisor supported on non-real regular points of $X$. There exists a divisor $E$, supported on regular points, and a unit $f$ of the total quotient ring of $X$, which is non-negative on every real point where it is defined, such that
     \begin{equation*}
         D=2E+\divv(f).
     \end{equation*}
 \end{prop}
 \begin{proof}
     As the support of $D$ does not contain real points, we can write its pullback to $X_\C$ as $F+\bar{F}$ for some divisor $F$ where $\bar{F}$ denotes its complex conjugate. Let $F_1$ be a divisor supported on regular points fixed by the complex conjugation such that $\cO_{X_\C}(F-F_1)\in\Pic^\circ(X_\C)$. Since $\Pic^\circ(X_\C)$ is $2$-divisible by Lemma \ref{lem:complexdivisible}, there exists a divisor $G$ supported on regular points of $X_\C$ and a unit $g$ of the total quotient ring of $X_\C$ such that
     $2G+\divv(g)=F-F_1$.
     This implies that
     \begin{equation*}
         2(G+\bar{G})+\divv(g\bar{g})=F+\bar{F}-2F_1
     \end{equation*}
     and thus $D=2E+\divv(f)$ where $E$ is the divisor and $f$ the rational function whose pullback to $X_\C$ is $G+\bar{G}+F_1$ and $g\bar{g}$, respectively. It is clear that $f$ is non-negative at every real point where it is defined.
 \end{proof}
 Proposition \ref{prop:narrow} can be used to generalize a result by Geyer--Martens \cite{geyermartens} on the structure of $2$-torsion points on a smooth real curve to singular curves. 
 Namely, for a connected curve $X$ over $\R$ we consider the map
 \begin{equation*}
{\rm sg}\colon\Pic(X)_2\to \tilde{H}^0(X(\R),\{\pm1\})   
 \end{equation*}
 which sends a $2$-torsion point represented by a divisor $D$, supported on regular points, to the signs that a rational function, whose divisor is $2D$, takes on the different connected components of $X(\R)$. Here $\tilde{H}^0(X(\R),\{\pm1\})$ denotes the $0$th reduced singular cohomology group of $X(\R)$ with coefficients in the multiplicative group $\{\pm1\}$. It is straightforward to check that this is a well-defined group homomorphism. The elements of $\Pic(X)^+_2=\ker(\rm sg)$ are called \emph{positive $2$-torsion points} because the associated rational function can be chosen to be non-negative on $X(\R)$. The following was shown in the smooth case by Geyer--Martens in \cite{geyermartens}*{Section 5}.
 \begin{cor}\label{cor:geyer}
     For every connected curve $X$ over $\R$ the map $\rm sg$ is surjective.
 \end{cor}
\begin{proof}
    Because $X$ is projective, there exists an open affine subset $U$ of $X$ which contains $X(\R)$ and all singular points. Indeed, if $X$ is embedded to $\pp^n$, then, after applying a linear change of coordinates if necessary, we can take the principal open subset of all points where $x_0^2+\cdots+x_n^2$ is non-zero. Embedding $U$ to $\A^m$ gives an embedding of $X(\R)$ to $\R^m=\A^m(\R)$. Let $X_1$ be a union of connected components of $X(\R)$ and $X_2=X(\R)\smallsetminus X_1$. Since $X_1$ and $X_2$ are disjoint compact subsets of $\R^m$, there is a continuous function $\psi\colon\R^m\to\R$ which satisfies $\psi(x)\leq-1$ for $x\in X_1$ and $\psi(x)\geq1$ for $x\in X_2$. By the Stone--Weierstrass theorem we can 
     uniformly approximate $\psi$ on a compact subset of $\R^m$ containing $X(\R)$ arbitrarily close by a polynomial. In particular, there exists $g\in\R[x_1,\ldots,x_m]$ with $g(x)<0$ for $x\in X_1$ and $g(x)>0$ for $x\in X_2$. After replacing $g$ by a small perturbation if necessary, we can further assume that $g$ does not vanish on any singular point of $X$. We apply Proposition \ref{prop:narrow} to the principal divisor $D=\divv(g)$ and obtain a divisor $E$, supported on regular points, and a unit $f$ of the total quotient ring of $X$, which is non-negative on every real point where it is defined, such that $\divv(\frac{g}{f})=2E$. The $2$-torsion point $\cO_X(E)$ of $\Pic(X)$ is then mapped by $\rm sg$ to a cohomology class represented by the function which is $-1$ on $X_1$ and $+1$ on $X_2$. Since $X_1$ was chosen to be an arbitrary union of connected components of $X(\R)$ and $X_2$ its complement, this shows the claim.
\end{proof}
We will see that positive $2$-torsion points are closely related to those extreme rays of $\Pos(X,L)$ with only regular zeros. Corollary \ref{cor:geyer} gives a way to count them.

\subsection{Generalized elliptic curves}\label{sec:genell}
A \emph{generalized elliptic curve} over $\R$ is a connected curve over $\R$ with dense real points such that $\omega^\circ_X\cong\cO_X$. Here $\omega^\circ_X$ is the dualizing sheaf of $X$. The prototype of a generalized elliptic curve is a reduced plane cubic curve but there are more examples.

\begin{Def}
   For $n\in\N$ the \emph{N\'eron $n$-gon} is obtained by taking a copy $X_i$ of $\pp^1$ for each $i\in\Z/n\Z$ and gluing the point $\infty$ of $X_i$ to the point $0$ of $X_{i+1}$ such that the intersection points become (ordinary) nodes.
\end{Def} 

Note that for $n\leq 3$ the N\'eron $n$-gon is isomorphic to a planar cubic curve.
The following characterization is a classical result of Kodaira.

\begin{thm}[\cite{kodaira}*{Theorem 6.2}]\label{thm:kodairaclass}
    Every generalized elliptic curve is either isomorphic to a reduced plane cubic or to the N\'eron $n$-gon.    
\end{thm}

We list the resulting possible types of generalized elliptic curves over $\R$.

\begin{cor}\label{cor:list}
    Let $X$ be a generalized elliptic curve over $\R$. Then $X$ is isomorphic to one of the following: a non-singular elliptic curve, a N\'eron $n$-gon for $n\in\N$, a rational curve with one isolated node, a smooth plane conic and a line meeting in two non-real points, a rational curve with one cusp, a smooth plane conic and one of its tangents, or three planar lines that meet in one point.
\end{cor}

\begin{rem}\label{rem:inclusiontonorm}
    Let $X$ be a generalized elliptic curve over $\R$ and let $\pi\colon\tilde{X}\to X$ be its normalization. The goal of this remark is to describe the inclusion $\cO_X\hookrightarrow\pi_*\cO_{\tilde{X}}$ of coherent sheaves on $X$ in detail. At non-singular points, this is an isomorphism. If $x\in X(\C)$ is a point at which $m$ (complex) branches meet transversally, then there are $m$ distinct $\C$-points points $x_1,\ldots,x_m$ in $\tilde{X}(\C)$ lying over $x$ and the local ring $\cO_{X,x}$ consists of all $f\in\cO_{\tilde{X},\pi^{-1}(x)}$ such that $f(x_i)=f(x_j)$ for $i,j=1,\ldots,m$. If two branches are tangent to each other at $x$, then there are two distinct $\C$-points $x_1,x_2$ in $\tilde{X}(\C)$ lying over $x$ and the local ring $\cO_{X,x}$ consists of all $f\in\cO_{\tilde{X},\pi^{-1}(x)}$ such that $f(x_1)=f(x_2)$ and $f'(x_1)=f'(x_2)$ (the derivative is taken with respect to a suitable local parameter). Finally, if $x\in X(\R)$ is a cusp, then there is a unique $\R$-point $y$ in $\tilde{X}(\R)$ lying over $x$ and $\cO_{X,x}$ consists of all $f\in\cO_{\tilde{X},y}$ such that $f'(y)=0$. This covers all possibilities. 
\end{rem}

We conclude this section with an estimate of positive $2$-torsion points on generalized elliptic curves.
\begin{lem}\label{lem:posbound}
    Let $X$ be a generalized elliptic curve over $\R$. There are at most $2$ positive $2$-torsion points in $\Pic(X)$.
\end{lem}
\begin{proof}
    Unless $X$ is smooth, every irreducible component is rational. In this case, we have by \cite{stacks-project}*{Lemma 0CE6}  that $|\Pic(X)_2|\leq2$. If $X$ is smooth with $X(\R)$ having $r$ connected components, then
    \begin{equation*}
        r=\dim_{\F_2}(\Pic(X)_2)=\dim_{\F_2}(\Pic(X)^+_2)+r-1.
    \end{equation*}
    Here the first equality is a classical fact about real elliptic curves and the second equality holds
    by Corollary \ref{cor:geyer}. This proves that $\dim_{\F_2}(\Pic(X)^+_2)=1$.
\end{proof}
\subsection{Spectrahedra}
A \emph{spectrahedron} in $\R^n$ is the inverse image of the cone of positive semidefinite matrices under an affine linear map from $\R^n$ to the vector space of real symmetric matrices of size $d\times d$ for some $d\in\N$. Equivalently, a spectrahedron is a set of the form
\begin{equation*}
    \{x\in\R^n\mid A_0+x_1A_1+\cdots+x_nA_n\textrm{ is positive semi-definite}\},
\end{equation*}
where $A_0,\ldots,A_n$ are real symmetric matrices of the same size. A \emph{spectrahedral shadow} is the image of a spectrahedron under an affine linear map. For reading about the many interesting properties and applications of spectrahedra and their shadows we recommend the books \cites{clausbook,timdaniel,siambook}.
\section{Proofs and examples}
Let $X$ be a generalized elliptic curve over $\R$ and let $L$ be a line bundle on $X$.
We let $X=\cup_{i=1}^rX_i$ be the decomposition of $X$ into irreducible components and $\pi\colon\tilde{X}\to {X}$ the normalization of ${X}$. Hence $\tilde{X}$ is a smooth projective real curve with irreducible components $\tilde{X}_1,\ldots,\tilde{X}_r$ where each $\tilde{X}_i$ is the normalization of $X_i$. There exists a short exact sequence
\begin{equation}\label{eq:normsequence}
    0\to\cO_X\to\pi_*\cO_{\tilde{X}}\to\shS\to0
\end{equation}
of coherent sheaves on $X$ where $\shS$ is the cokernel of the map $\cO_X\to\pi_*\cO_{\tilde{X}}$.  The support of $\shS$ is the singular locus of $X$.  For a closed point $x\in X$ the dimension (as $\R$-vector space) of the stalk $\shS_x$ is denoted by $\delta(x)$.
Let $L$ be a line bundle on $X$ and $M=L\otimes L$. Tensoring (\ref{eq:normsequence}) by $M$ and passing to the long exact sequence of cohomology, we obtain
\begin{equation}\label{eq:longsequence}
  \begin{tikzcd}
   0\arrow{r}& H^0(X,M) \arrow{r}{\alpha}& \oplus_{i=1}^r H^0(\tilde{X}_i,M_i)\arrow{r}{\beta} & H^0(X,\shS)
  \end{tikzcd}
\end{equation}
where we denote by $M_i$ the pullback of $M$ to $\tilde{X}_i$. 
For $f\in H^0(X,M)$ and $i=1,\ldots,r$ we let $f_i\in H^0(\tilde{X}_i,M_i)$ be the pullback of $f$ to $\tilde{X}_i$ so that $\alpha(f)=f_1+\cdots+f_r$.
\begin{rem}\label{rem:upperbound}
The dimension of the image of $H^0(\tilde{X}_i,M_i)$ under $\beta$ is bounded from above by the sum $\delta_i=\sum_{x\in X_i}\delta(x)$. Using Remark \ref{rem:inclusiontonorm} and going through  Corollary \ref{cor:list} we see that this sum is at most $1$ for irreducible $X$ and that in general $\delta_i\leq2$ for every $i$. This property can also be deduced directly from the definitions by making use of the fact that $\pi^!_i\omega^\circ_X$, where $\pi_i\colon\tilde{X}_i\to X$ is the inclusion $\tilde{X}_i\hookrightarrow\tilde{X}$ composed with the normalization $\pi\colon\tilde{X}\to X$, is, on the one hand, the canonical sheaf of $\tilde{X}_i$ by \cite{hartshorne}*{Exercise III.7.2}, and, on the other hand, the codifferent sheaf $\pi^!_i\cO_X$ of $\pi_i$.
\end{rem}
\begin{lem}\label{lem:oneindep}
    Let $f\in H^0(X,M)$ and let $j\in\{1,\ldots,r\}$ such that $f_j\neq0$. Let $p,q\in \tilde{X}_j$ distinct points with $f_j(p)=f_j(q)=0$ such that neither $\pi(p)$ nor $\pi(q)$ lies on an irreducible component of ${X}$ on which $f$ vanishes identically. We consider the vector space $V$ of all $g\in H^0(X,M)$ that satisfy:
    \begin{enumerate}
        \item For all $i\neq j$ there exists $\lambda_i\in\R$ such that $g_i=\lambda_i\cdot f_i$.
        \item $\divv(g_j)\geq \divv(f_j)-p-q$.
    \end{enumerate}
    The vector space $V$ has dimension at least $2$.
\end{lem}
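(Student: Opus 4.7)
The plan is to realise $V$ as the preimage under $\iota$ of an explicit linear subspace $L$ of $\bigoplus_i\cL(mH_i)$ and then bound its dimension by a Riemann--Roch count.

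First, I would rewrite the two defining conditions of $V$ in terms of $\iota$. Condition~(1) places $g_i$ in the one-dimensional subspace $\RR\cdot f_i\subseteq\cL(mH_i)$ (with the convention $\RR\cdot 0=0$). Condition~(2) is equivalent to $g_j/f_j\in\cL(p+q)$, hence to $g_j\in f_j\cdot\cL(p+q)\subseteq\cL(mH_j)$; this last containment uses $f_j(p)=f_j(q)=0$, so that the simple poles permitted for elements of $\cL(p+q)$ are absorbed by the zeros of $f_j$. Setting
\[
L \;=\; \bigoplus_{i\neq j}\RR\cdot f_i \;\oplus\; f_j\cdot\cL(p+q) \;\subseteq\; \bigoplus_i\cL(mH_i),
\]
we obtain $V=\iota^{-1}(L)$. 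Because $\iota$ is injective, the standard dimension formula in a finite-dimensional vector space gives
\[
\dim V \;\geq\; \dim\RR[C]_{\leq m}+\dim L-\sum_i\dim\cL(mH_i).
\]

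The next step is to estimate each term. Writing $s=\#\{i\neq j:f_i=0\}$, we have $\dim L=(r-1-s)+\dim\cL(p+q)$. The difference $\sum_i\dim\cL(mH_i)-\dim\RR[C]_{\leq m}$ equals the codimension of $\iota(\RR[C]_{\leq m})$, which from the short exact sequence $0\to\cO_{\bar X}\to\pi_*\cO_{\tilde X}\to\pi_*\cO_{\tilde X}/\cO_{\bar X}\to 0$ (twisted by $\cO(m)$, using $H^1(\bar X,\cO(m))=0$ for $m\geq 2$) is bounded by the total delta invariant $\delta$ of the singularities of $\bar X$. Since $\bar X_j$ is an irreducible component of a plane cubic, its normalization $\tilde X_j$ has geometric genus at most $1$, so Riemann--Roch applied to the degree-$2$ divisor $p+q$ on $\tilde X_j$ yields $\dim\cL(p+q)\geq 3-g(\tilde X_j)\geq 2$. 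Combining these estimates,
\[
\dim V \;\geq\; \dim\cL(p+q)+(r-1-s)-\delta.
\]

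It then remains to verify that the right-hand side is at least~$2$. When $s=0$, this follows from a short case analysis based on the classification of plane cubics: smooth cubic ($r=1,\,\delta=0,\,\dim\cL(p+q)=2$); singular irreducible cubic ($r=1,\,\delta=1,\,\dim\cL(p+q)=3$); line-plus-conic, including tangential and complex intersections ($r=2,\,\delta=2,\,\dim\cL(p+q)=3$); and the various three-line configurations ($r=3,\,\delta=3,\,\dim\cL(p+q)=3$). Each yields exactly the bound~$2$. The main obstacle is the case $s\geq 1$. Here I would factor $f=\bigl(\prod_{i:f_i=0}P_i\bigr)\,f''$ and pass to the sub-curve $Y=\bigcup_{\,i:f_i\neq 0\ \text{or}\ i=j}X_i$. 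The hypothesis that neither $\pi(p)$ nor $\pi(q)$ lies on a component of $\bar X$ on which $f$ vanishes ensures that $\prod_{i:f_i=0}P_i$ has no zeros at $p,q$ on $\tilde X_j$, so the map $g\mapsto g/\prod_{i:f_i=0}P_i$ is an isomorphism between $V$ and the analogous space built from $f''$ on $Y$, which is in the $s=0$ situation for the smaller plane curve $Y$. Applying the case analysis to $Y$ then concludes $\dim V\geq 2$.
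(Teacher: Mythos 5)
Your construction is essentially the paper's: the subspace $L$ you form is exactly the paper's $V_3$, and the preimage dimension bound is the same inclusion--exclusion step, so for $s=0$ your argument is complete and matches the paper's.

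The gap is in the $s\geq 1$ step. Once $s\geq 1$, the cubic is reducible, so $\sum_i g_i=0$, $\dim\cL(p+q)=3$ and $\delta=r$, and your inequality yields only $\dim V\geq 2-s\leq 1$. You acknowledge this and propose dividing by $\prod_{i:f_i=0}P_i$, which does give an isomorphism onto the analogous space for the subcurve $Y$ of degree $3-d$ with degree bound $m-d$. But ``applying the case analysis to $Y$'' is not yet a proof: the $s=0$ case analysis and the displayed inequality were both derived for plane cubics ($\dim\RR[C]_{\leq m}=3m$, $\delta=r-\sum g_i$, $H^1(\cO_{\bar{X}}(m))=0$), whereas $Y$ has degree $1$ or $2$. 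Re-deriving the bound for $Y$, all of whose components are rational with $\delta_Y=r_Y-1$, the estimate becomes $\dim V\geq\dim\cL(p+q)+(r_Y-1)-\delta_Y=3$; the fact that this is $3$ rather than the $2$ you assert indicates the $Y$-computation was not actually carried out. You should also note that the degenerate case $m-d=0$ (where $f''$ is a nonzero constant, so $f_j$ is a scalar multiple of $\bigl(\prod_{i:f_i=0}P_i\bigr)\big|_{\tilde{X}_j}$, which by hypothesis has no zeros at $p$ or $q$) is vacuous. The paper's proof avoids this two-stage argument by restricting the preimage computation to $V_1\subseteq\RR[C]_{\leq m}$ and $V_2=\bigoplus_{i\notin S}\cL(mH_i-D_i)$ instead of the full ambient spaces --- effectively doing the $Y$-count in place --- and obtains the uniform bound $\dim V\geq\frac{1}{2}d(3-d)+2\geq 2$.
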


\begin{proof}
  If $X$ is smooth, then the claim follows from the Riemann--Roch theorem. Hence we assume from now on that $X$ is singular. Then each $\tilde{X}_i$ is isomorphic to $\pp^1$ and thus the space $W$ of all $h\in H^0(\tilde{X}_j,M_j)$ with $\divv(h)\geq \divv(f_j)-p-q$ has dimension $3$. By Remark \ref{rem:upperbound} the dimension of $\beta(H^0(\tilde{X}_j,M_j))$ is at most $2$. Since $\dim(W)=3$, there is $0\neq h\in W\cap\ker(\beta)$. Thus the claim follows if $h$ is linearly independent from $\alpha(f)$. Hence we assume in the following that $h$ and $\alpha(f)$ are linearly dependent. This can only happen if $f_i=0$ for all $i\neq j$ and thus $\beta(f_j)=0$. If $r>1$, then for $g\in H^0(\tilde{X}_j,M_j)$ the condition $\beta(g)=0$ is equivalent to a vanishing condition on the points on $\tilde{X}_j$ that are mapped by $\pi$ to some $X_i$ with $i\neq j$. Since $f_j$ satisfies this condition and since neither $p$ nor $q$ are mapped by $\pi$ to some $X_i$ with $i\neq j$, the condition $\divv(g)\geq \divv(f_j)-p-q$ implies that $g\in H^0(\tilde{X}_j,M_j)$ satisfies this vanishing condition as well. Thus $W$ lies in the kernel of $\beta$ which implies the claim.
  It remains to treat the case that $r=1$. Then by Remark \ref{rem:upperbound} the dimension of $\beta(H^0(\tilde{X}_j,M_j))$ is $1$ and thus the space $W\cap\ker(\beta)$ is at least of $2$-dimensional which implies the claim.
\end{proof}

\begin{proof}[Proof of Theorem \ref{thm:extreme1}]
    Let $f\in H^0(X,L\otimes L)$ be non-negative on $X(\R)$ and let $S$ be the set of indices $i\in\{1,\ldots,r\}$ for which $f$ vanishes identically on $X_i$. Assume that there is $x\in X\smallsetminus(\cup_{i\in S}X_i)$ with $f(x)=0$ such that $x\not\in X(\R)$. We have to show that $f$ does not generate an extreme ray of $\Pos(X,L)$.

    Let $X_j$ be an irreducible component of $X$ that contains $x$. Note that this implies $j\not\in S$.
    There exists $p\in\pi^{-1}(x)\cap \tilde{X}_j$ and we let $q=\bar{p}$ be its complex conjugate. Because $x\not\in X(\R)$, the points $p$ and $q$ are two distinct elements of $\tilde{X}_j$. Now $f$, $p$ and $q$ satisfy the assumptions of Lemma \ref{lem:oneindep}. Thus there exists $g\in H^0(X,L\otimes L)$ which is linearly independent of $f$ such that: 
    \begin{enumerate}
        \item For all $i\neq j$ there exists $\lambda_i\in\R$ such that $g_i=\lambda_i\cdot f_i$.
        \item $\divv(g_j)\geq \divv(f_j)-p-q$.
    \end{enumerate}
    Let $y\in\tilde{X}$, say $y\in\tilde{X}_i$, such that $\pi(y)\in X(\R)$. 
    Conditions (1) and (2) imply, by choice of $p$ and $q$, that $v_y({g}_i)\geq v_y({f}_i)$.
    Therefore, there exists $\epsilon_i>0$ such that ${f}_i\pm\epsilon{g}_i$ is non-negative on $\tilde{X}_i(\R)$. This implies that $f\pm\epsilon g$ is non-negative on the set of non-isolated points of $X(\R)$ for $\epsilon=\min_{i=1}^r\epsilon_i$. 

    Finally, we note that if $f$ vanishes at an isolated point $z$ of $X(\R)$, then $g$ vanishes this point as well. This follows from $v_y(g_i)\geq v_y(f_i)$ by choosing above $y$ in the preimage of $z$ under $\pi$.
    At an isolated real point where $f$ is positive, we can ensure that $f\pm\epsilon g$ is positive as well by replacing $\epsilon$ by a smaller positive number. Since there is at most one isolated point, there exists $\epsilon>0$ such that $f\pm\epsilon g$ is non-negative on $X(\R)$. This shows that $f$ is not an extreme ray of $\Pos(X,L)$.
\end{proof}

\begin{proof}[Proof of Theorem \ref{thm:sumofsquares}]
    Let $D$ be a divisor on $X$ supported on regular points such that $L\cong\cO_X(D)$, the subsheaf of the sheaf of total quotient rings of $\cO_X$ associated to $D$. Let $s\in H^0(X,L\otimes L)$ generate an extreme ray of $\Pos(X,L)$. Consider the partial normalization $\tilde{Y}\to X$ at all singular points of $X$ where $s$ vanishes. Further, let $Y\subseteq\tilde{Y}$ be the union of all irreducible components of $\tilde{Y}$ where the pullback of $s$ to $\tilde{Y}$ is not identically zero. Finally, let $\rho\colon Y\to X$ be the composition of the inclusion $Y\hookrightarrow\tilde{Y}$ with $\tilde{Y}\to X$. Then the restriction $D'=D|_Y$ satisfies $\rho^*L\cong\cO_Y(D')$. We choose $f\in\cO_Y(2D')$, a rational function on $Y$, which corresponds to $\rho^*s$ under $\rho^*L\cong\cO_Y(D')$. We denote by $E_1$ the restriction of the zero divisor of $\rho^*s$ to the set of all regular points of $Y$ that lie over a singular point of $X$. Then the principal divisor of $f$ is given by $\divv(f)=E_1+E_2-2D'$ for some effective divisor $E_2$ supported on regular points. By Theorem \ref{thm:extreme1} the divisor $E_2$ is a sum of real non-isolated points of $Y$. Because $f$ is non-negative on $Y(\R)$, it follows that $E_2=2F$ for some divisor $F$ because real zeros of non-negative sections must occur with even multiplicity. Consider the bilinear map
    \begin{equation*}
        \begin{tikzcd}
            \cO_Y(F)\times \cO_Y(F)\arrow{r}&\cO_Y(E_2)\arrow{r}{\cdot f}&\cO_Y(2D'-E_1)\subseteq\cO_X(2D)
        \end{tikzcd}
    \end{equation*}
    where the first arrow is the multiplication map. Letting $A=\cO_Y(F)$, the induced morphism $\psi\colon\rho_*A\otimes\rho_*A\to L\otimes L$ of coherent sheaves on $X$ is positive semi-definite and $s$ is the image of a square of a global section of $A$. We argue that, up to isomorphism and multiplication by a positive scalars (which does not destroy the desired properties), there are only finitely many choices for $\psi$ and $A$. Indeed, because there are only finitely many singular points, there are only finitely many choices for $\tilde{Y}$. Since there are only finitely many irreducible components, there are only finitely many choices for $Y$. As the degree of $L$ is finite, there are only finitely many possibilities for the divisor $E_1$. This implies that there are, up to linear equivalence, only finitely many possibilities for $E_2$. Since $2F=E_2$ and because the Picard group of $Y$ has only finitely many $2$-torsion points, see  \cite{stacks-project}*{Lemma 0CE6}, there are only finitely many possibilities for the divisor class of $F$ as well. Different resulting maps $\psi$ for the same $F$ can differ only by multiplication with positive scalars. Hence we arrive at finitely many coherent sheaves $A_1,\ldots,A_m$ on $X$ and positive semi-definite morphisms $\psi_i\colon A_i\otimes A_i\to L\otimes L$ such that every generator of an extreme ray of $\Pos(X,L)$ is the image of a square of a global section of one of the $A_i$. Hence we can choose $\cG$ and $\varphi$ to be the direct sums of the $A_i$ and $\psi_i$, respectively. The additional statement follows from the fact that every closed convex cone that is pointed is generated by its extreme rays.
\end{proof}

\begin{rem}
    The proof of Theorem \ref{thm:sumofsquares} simplifies when the extreme ray $s$ has only regular zeros. In this case, the proof shows that $s$ is the image of a square under a positive semi-definite isomorphism $\psi\colon A\otimes A\to L\otimes L$ where $A$ is a line bundle. Hence we have $A=L\otimes T$ where $T\in\Pic(X)_2$ is an positive $2$-torsion point. By Lemma \ref{lem:posbound} there are at most two such points. However, as Example \ref{ex:cusp1} shows, line bundles are not enough to describe extreme rays that vanish at singular points.
\end{rem}

\begin{ex}\label{ex:smooth}
    Consider the case that $X$ is a smooth elliptic curve over $\R$. A plane affine model of $X$ is given by the equation $y^2=x\cdot p(x)$ where $p$ is a univariate polynomial of degree $2$ with simple zeros which are either positive or non-real. 
    Let $P$ be the point $(0,0)$ and let $O$ be the point at infinity. Then the unique (see Lemma \ref{lem:posbound}) non-trivial positive $2$-torsion point is represented by the divisor $T=P-O$. Indeed, we have $\divv(x)=2T$ and $x$ is non-negative on $X(\R)$. Hence, if $D$ a divisor on $X$ and $L=\cO_X(D)$, then every extreme ray of $\Pos(X,L)$ is either a square of a global section of $\cO_X(D)$ or the image of a square under the map
    \begin{equation*}
        \begin{tikzcd}
            \cO_X(D+T)\otimes \cO_X(D+T)\arrow{r}&\cO_X(2D+2T)\arrow{r}{\cdot x}&\cO_X(2D).
        \end{tikzcd}
    \end{equation*}
\end{ex}

\begin{ex}\label{ex:cusp1}
    We illustrate the proof of Theorem \ref{thm:sumofsquares} in the case when $X$ is the rational curve with one cusp. A plane affine model of $X$ is given by the equation $y^2=x^3$. Let $O$ be the point at infinity and $L=\cO_X(dO)$ for $d\in\N$. Note that for $d=1,2$ the line bundle $L$ is ample but not very ample. The only possibilities for $\rho\colon Y\to X$ are the identity $X\to X$ and the normalization $\pp^1\to X$. In the first case, we have $E_1=0$. Since the Picard group of $X$ is the additive group $\R\times\Z$, which is torsion-free, the only possibility, up to a positive scalar, for $\psi$ is the identity on $L\otimes L$. In the case $Y=\pp^1$, we denote by $P$ the preimage of the cusp under $\pi$. Since $E_2$ has even degree and $E_1$ is linearly equivalent to $2dO-E_2$, we either have $E_1=2kP$ for $k=1,\ldots,d$. Here, by abuse of notation, we denote the preimage of $O$ under $\pi$ also by $O$. Again, as the Picard group of $\pp^1$ is torsion-free, we find that $F$ is linearly equivalent to $dO-kP\sim (d-k)O$.
    In order to describe the resulting map $\psi$ on global sections explicitly, we identify the global sections of $\cO_X(mO)$ and $\cO_Y(mO)$ with the spaces $\R[t^2,t^3]_{\leq m}$ and $\R[t]_{\leq m}$ of univariate polynomials in $\R[t^2,t^3]$ and $\R[t]$ that have degree at most $m$. We get the map
    \begin{equation*}
        \begin{tikzcd}
            \R[t]_{\leq d-k}\times \R[t]_{\leq d-k}\arrow{r}&\R[t]_{\leq 2d-2k}\arrow{r}{\cdot t^{2k}}&\R[t^2,t^3]_{\leq 2d}.
        \end{tikzcd}
    \end{equation*}
    The images of squares are thus of the form $(t^{k}g)^2$ where $g\in\R[t]_{\leq d-k}$. For $k\geq2$ we have $t^{k}g\in\R[t^2,t^3]$, so that these sections of $L\otimes L$ are actually squares of sections of $L$. Hence, these are already covered by the case $Y=X$. In conclusion, every extreme ray of $\Pos(X,L)$ is either the square of a section of $L=\cO_X(dO)$ or of $\pi_*\cO_{\pp^1}((d-1)O)$, and we can choose $\cG$ to be the sum of these two sheaves.
\end{ex}

Theorem \ref{thm:sumofsquares} gives the following nice description of the dual cone $\Pos(X,L)^\vee$.
\begin{cor}\label{cor:speccone}
    Let $X$ be a generalized elliptic curve over $\R$ and let $L$ be a line bundle on $X$. The dual cone $\Pos(X,L)^\vee$ is the a spectrahedral cone of all linear forms $\ell\colon H^0(X,L\otimes L)\to\R$ such that 
\begin{equation*}
    B_\ell\colon H^0(X,\cG)\times H^0(X,\cG)\to\R,\, (g_1,g_2)\mapsto \ell(\varphi(g_1\otimes g_2))
\end{equation*}
is positive semi-definite.
\end{cor}
\begin{rem}\label{rem:blocksizes}
    The proof of Theorem \ref{thm:sumofsquares} shows that the description of $\Pos(X,L)^\vee$ in Corollary \ref{cor:speccone} consists of several blocks, one corresponding to each of the sheaves $A_1,\ldots,A_m$ on $X$. A more careful analysis gives bounds on the size of these blocks. By the construction, the size of the block corresponding to $A_i$ is equal to $h^0(X,A_i)$. Using the notation as in the proof of Theorem \ref{thm:sumofsquares}, this number is equal to $h^0(Y,\cO_Y(F))$. By the Riemann--Roch theorem \cite{stacks-project}*{Lemma 0BS6} we have
    \begin{equation}\label{eq:riemannroch}
        h^0(Y,\cO_Y(F))=\deg(F)+h^0(Y,\cO_Y)-h^1(Y,\cO_Y)+h^0(Y,\cO_Y(K_Y-F))
    \end{equation}
    where $K_Y$ is a divisor on $Y$, supported on regular points, such that $\cO_Y(K_Y)\cong\omega^\circ_Y$. If $Y=X$, then Equation \ref{eq:riemannroch} simplifies to
    \begin{equation*}
        h^0(Y,\cO_Y(F))=\deg(D)+h^0(X,\cO_X(-F)).
    \end{equation*}
    If $L$ is ample, then divisor $F$ has positive degree and we obtain $\deg(D)$ as the size of the block. In the case that $Y$ is different from $X$, then $0\leq\deg(F)<\deg(D)$ and $\deg(K_Y)<0$.  Hence Equation \ref{eq:riemannroch} implies 
    \begin{equation*}
        h^0(Y,\cO_Y(F))\leq\deg(D)-1+h^0(Y,\cO_Y).
    \end{equation*}
    We can further assume that $Y$ is connected because otherwise $\cO_Y(F)$ is the direct sum of its restrictions to the connected components of $Y$ and our block decomposes into smaller blocks. In this case $h^0(Y,\cO_Y)=1$ and we find $h^0(Y,\cO_Y(F))\leq\deg(D)$. For later reference we note that if $L$ is ample, the Riemann--Roch theorem implies
    \begin{equation*}
        h^0(X,\cO_X(2D))=2\deg(D)
    \end{equation*}
    so that every block is of size at most $\frac{1}{2}h^0(X,L\otimes L)$.
\end{rem}
The proof of Corollary \ref{cor:convexhull}  is a combination of Corollary \ref{cor:speccone} and Proposition \ref{prop:narrow}.
\begin{proof}[Proof of Corollary \ref{cor:convexhull}]
    Because $X(\R)\subseteq\R^n=(\pp^n\smallsetminus H)(\R)$ is compact, there exists an affine linear functional $\R^n\to\R$ which is positive on $X(\R)$. Let $\ell$ be the corresponding global section of $M=\cO_X(1)$. By slightly perturbing $\ell$ if necessary, we can assume that the zero set of $\ell$ consists only of regular points of $X$. Let $D$ be its zero divisor. By Proposition \ref{prop:narrow} there exists a divisor $E$, supported on regular points, and a unit $f$ of the total quotient ring of $X$, which is non-negative on every real point where it is defined, such that
     $D=2E+\divv(f)$. We have isomorphisms
     \begin{equation*}
        \begin{tikzcd}
            \cO_X(E)\otimes \cO_X(E)=\cO_X(2E)\arrow{r}{\cdot f}&\cO_X(D)\arrow{r}{\cdot \ell}&M.
        \end{tikzcd}
    \end{equation*}
    Letting $L=\cO_X(E)$, the induced vector space isomorphism $$\psi\colon H^0(X,L\otimes L)\to H^0(X,M)={\rm Aff}(\R^n)$$ to the space ${\rm Aff}(\R^n)$ of affine linear functionals on $\R^n$ identifies $\Pos(X,L)$ with the cone $K$ of affine linear functionals that are non-negative on $X(\R)$. On the other hand, the affine linear map 
    \begin{equation*}
        {\rm ev}\colon\R^n\to{\rm Aff}(\R^n)^\vee,
    \end{equation*}
    that maps a point to the corresponding point evaluation, satisfies ${\rm ev}^{-1}(K^\vee)=\conv(X(\R))\subseteq\R^n$. Therefore, the set $\conv(X(\R))={\rm ev}^{-1}(\psi^\vee(\Pos(X,L)^\vee))$ is a spectrahedron by Corollary \ref{cor:speccone}.
\end{proof}
\begin{rem}\label{rem:blocksize2}
    It follows from Remark \ref{rem:blocksizes} that, in the situation of Corollary \ref{cor:convexhull}, the description of $\conv(X(\R))\subseteq\R^n$ consists of blocks, each of size at most $\frac{n+1}{2}$. Note that in general one cannot expect a description that only involves blocks of smaller size, even when allowing a description as a spectrahedral shadow \cite{scheiderercurves2}.
\end{rem}
\begin{ex}\label{ex:smooth2}
 Consider the curve $X$ in $\pp^3$ defined by
 \begin{align*}
     x_0^2-x_1^2-x_2^2&=0,\\
     x_0^2-4x_1^2-x_3^2&=0.
 \end{align*}
 One checks that $X$ is a smooth elliptic curve. The hyperplane defined by $x_0=0$ at infinity does not intersect $X(\R)$. Thus Corollary \ref{cor:convexhull} says that the convex hull of $X(\R)$ in the affine chart $x_0\neq0$ is a spectrahedron. Indeed, it is the set of all $(x_1,x_2,x_3)$ such that the matrix
 \begin{equation*}
     \begin{pmatrix}
        1-x_1&x_2&0&0\\
        x_2&1+x_1&0&0\\
        0&0&1-2x_1&x_3\\
        0&0&x_3&1+2x_1
     \end{pmatrix}
 \end{equation*}
 is positive semi-definite. The two blocks correspond to line bundles $L_1$ and $L_2$ with $L_i\otimes L_i\cong\cO_X(1)$ which differ by the non-trivial positive $2$-torsion point of $\Pic(X)$.
\end{ex}
\begin{ex}\label{ex:cusp2}
 Consider the curve $X$ in $\pp^3$ defined by
 \begin{align*}
     x_0x_1-x_1^2-x_2^2&=0,\\
     x_0x_2-x_1x_2-x_3^2&=0.
 \end{align*}
 It has a cusp at $(1:1:0:0)$ and can be parametrized as follows
 \begin{equation*}
     \pp^1\to X,\, (s:t)\mapsto(s^4+t^4:s^4:s^2t^2:st^3).
 \end{equation*}
 Therefore, it is isomorphic to the curve from Example \ref{ex:cusp1} and $\cO_X(1)$ is isomorphic to $\cO_X(4O)$ where $O=(1:0:0:0)$. We use the notation as in Example \ref{ex:cusp1}.
 The hyperplane defined by $x_0=0$ at infinity does not intersect $X(\R)$. Thus Corollary \ref{cor:convexhull} says that the convex hull of $X(\R)$ in the affine chart $x_0\neq0$ is a spectrahedron. Indeed, it is the set of all $(x_1,x_2,x_3)$ such that the matrix
 \begin{equation*}
     \begin{pmatrix}
        1-x_1&x_2&0&0\\
        x_2&x_1&0&0\\
        0&0&1-x_1&x_3\\
        0&0&x_3&x_2
     \end{pmatrix}
 \end{equation*}
 is positive semi-definite. The first block corresponds to the line bundle $\cO_X(2O)$ and the second one to the (non-invertible) sheaf $\pi_*\cO_{\pp^1}(O)$. The determinant of the second block cuts out the cone over $X$ whose apex is the cusp of $X$.
\end{ex}
\begin{figure}
    \centering
    \includegraphics[width=0.4\linewidth]{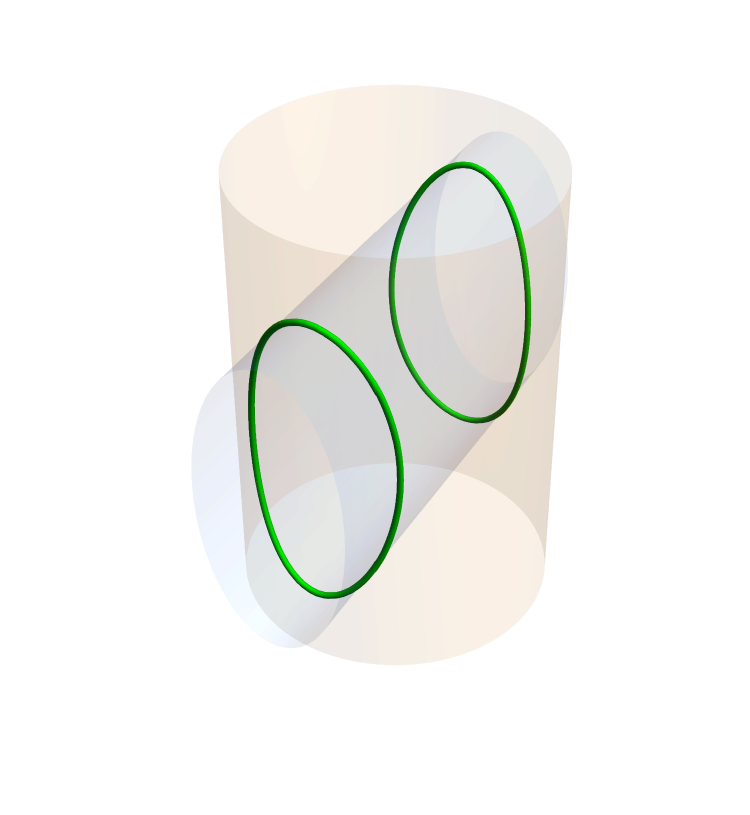}\hspace{1cm} \includegraphics[width=0.4\linewidth]{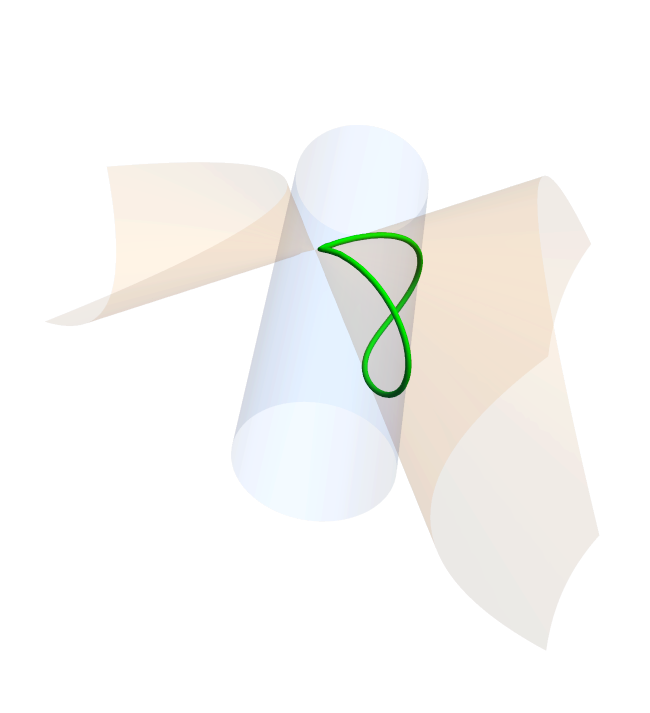} 
    \caption{The (generalized) elliptic curves from Example \ref{ex:smooth2} (left) and Example \ref{ex:cusp2} (right).}
    \label{fig:enter-label}
\end{figure}
\bigskip

\bibliographystyle{alpha}
\bibliography{biblio}
 \end{document}